\newtheorem{theorem}{Theorem}
\newtheorem{proposition}{Proposition}
\newtheorem{definition}{Definition}
\newtheorem{corollary}{Corollary}
\newtheorem{notation}{Notation}
\newtheorem{example}{Example}
\begin{document}

\title{Differential Brauer Monoids}
\author{Andy~R. Magid}
\address{Department of Mathematics\\
        University of Oklahoma\\
        Norman OK 73019\\
}
\email{amagid@ou.edu}
\subjclass{12H05}
\dedicatory{Dedicated to the memory of Professor Raymond T. Hoobler}
\maketitle

\begin{abstract}
The differential Brauer monoid of a differential commutative ring $R$ is defined. Its elements are the isomorphism classes of differential Azumaya $R$ algebras with operation from tensor product subject to the relation that two such algebras are equivalent if matrix algebras over them, with entry-wise differentiation, are differentially isomorphic. The fine Bauer monoid, which is a group, is the same thing without the differential requirement. The differential Brauer monoid is then determined from the fine Brauer monoids of $R$ and $R^D$ and the submonoid of the Brauer monoid whose underlying Azumaya algebras are matrix rings.
\end{abstract}

\section*{Introduction} Let $R$ be a commutative ring with a derivation $D_R$, or just $D$ if it is clear from context. A \emph{differential  $R$ algebra} is an $R$ algebra $A$ with a derivation $D_A$ which extends the derivation $D_R$ on $R$. If $A$ is a differential $R$ algebra, an $A$ module $M$ is a \emph{differential $A$ module} if there is an additive endomorphism $D_M$ of $M$ satisfying $D_M(am)=D_A(a)m+aD_M(m)$ for all $a \in A$ and $m \in M$. If  $A$ and $C$ are differential $R$ algebras, an $R$ algebra homomorphism $f :A \to C$ is \emph{differential} provided $D_C f= fD_A$. Thus we may speak of isomorphism classes of differential algebras. If $A$ and $B$ are differential algebras, so is $A \otimes_R B$, where the derivation of the tensor product  is $D_A \otimes 1 + 1 \otimes D_B$. The set of isomorphism classes is closed under the operation induced from tensor product, forming a commutative monoid  with identity the class of $R$.

This paper is concerned with differential $R$ algebras which are Azumaya as $R$ algebras, or \emph{differential Azumaya algebras}. The monoid of isomorphism classes of differential Azumaya algebras under the operation induced from tensor product is denoted $MA^\text{diff}(R)$. Inspired by definitions of the Brauer group, we call two differential Azumaya algebras $A$ and $B$ \emph{equivalent} provided there are integers $m$ and $n$ such that the algebras $M_m(A)$ and $M_n(B)$, with entrywise derivations, are isomorphic differential Azumaya $R$ algebras. This relation is an equivalence relation compatible with the operation in the monoid $MA^\text{diff}(R)$; we denote the resulting monoid of equivalence classes $BM^\text{diff}(R)$ and call it the \emph{differential Brauer monoid of $R$}.

We recall that the Brauer group $Br(S)$ of a commutative ring $S$ (no derivation) is made up of equivalence classes of Azumaya algebras over the ring, with the operation induced from the tensor product of algebras, where two Azumaya $S$ algebras $A$ and $B$ are \emph{Brauer equivalent} if there are finitely generated projective $S$ modules $P$ and $Q$ such that there is an $S$ algebra isomorphism $A \otimes_S \text{End}_S(P) \cong B \otimes_s \text{End}_S(Q)$.  This definition of Brauer equivalence was introduced by Auslander and Goldman in 1960 \cite{ag} (and anticipated for local rings by Azumaya nine years earlier) and generalized the construction of the Brauer group of a field.  One could also consider the finer equivalence relation where $P$ and $Q$ are free: that is, $A$ and $B$ are \emph{fine Brauer equivalent} if for integers $m$ and $n$ there is an $S$ algebra isomorphism $A \otimes_S M_m(S) \cong B \otimes_S M_n(S)$. (This isomorphism could be rewritten as $M_m(A) \cong M_n(B)$; we will call this \emph{matrix Brauer equivalence}.) The set of equivalence classes under the finer relation, which we denote $Br^*(S)$, is also a group; see Proposition \ref{Brauermonoid}.

These three equivalence relations have differential analogues for the differential ring $R$.  Differential Azumaya algebras $A$ and $B$ are said to be equivalent if there are differential Azumaya $R$ algebras $C$ and $D$ and a differential isomorphism  $A \otimes_R C \cong B \otimes_R D$ where: 1) $C$ and $D$ as $R$ algebras are endomorphism rings of projective modules (\emph{differential Brauer equivalence}); or 2) $C$ and $D$ as $R$ algebras are endomorphism rings of free modules (\emph{differential fine Brauer equivalence}); or 3) $C$ and $D$ are matrix algebras with entry-wise derivation (\emph{differential matrix Brauer equivalence}).  The set of equivalence classes in each of the three cases has an operation induced from tensor product: the resulting monoids are  quotient monoids of $MA^\text{diff}(R)$. As we will see, in cases 1) and 2) the equivalence relation collapses differential distinctions and just recovers $Br(R)$ (case 1)) and $B^*(R)$ (case 2)), while case 3), which produces $BM^\text{diff}(R)$,  preserves differential information.  Note: the reason the differential distinctions collapse in cases 1) and 2) is that the equivalence relation allows all possible differential structures on the endomorphism rings $C$ and $D$, while in case 3) there is a designated differential structure (entrywise derivation) on each matrix ring.

However, the equivalence classes in case 3) need not be a group. 

In 1967 Bass \cite{b} showed how the Brauer group of a commutative ring could be defined using the K-theory of the category of Azumaya algebras over the ring. Building on this latter idea, Juan and the author defined a Brauer group for differential Azumaya algebras over a differential field $F$ \cite{jm}: we took $K_0$ of the category of differential central simple algebras over $F$ and passed to the quotient by the subgroup generated by the matrix algebras over $F$ with entry-wise derivation, terming this quotient the differential Brauer group. The same definition applied to a differential ring $R$ defines what could be called the differential Brauer group of $R$. Although this differential Brauer group maps (surjectively) to the usual Brauer group by forgetting the derivations, its origin in K-theory presents difficulties. Unlike in the Brauer group of a commutative ring, not every element of this differential Brauer group is the image of an isomorphism class of a differential Azumaya algebra; this is due to the fact that the inverse of the class of a differential Azumaya algebra need not be the class of a differential Azumaya algebra. By staying with monoids, instead of passing to $K$ groups, this difficulty is avoided. The idea of using monoids of isomorphism classes instead of groups is found, in a different context, in \cite[Section 3 p. 297]{i}

By this approach, we seek to isolate in algebraic structure the additional contribution of ``differential" to the classification of differential Azumaya algebras, by separating out the contributions from the classification of the underlying Azumaya algebras captured in $Br^*(R)$ as well as the contribution from the ring of constants captured in $Br^*(R^D)$.

This process begins with the homomorphism  $BM^\text{diff}(R) \to Br^*(R)$. By Knus's Theorem \cite[Th\'eor\`eme 3, p. 639]{k} if $A$ is a differential Azumaya algebra which is a subalgebra of an Azumaya algebra $B$, then there is a derivation of $B$ which extends that of $A$. In particular, 
every Azumaya algebra $A$ over the differential ring $R$ has a derivation which extends that of $R$ so the above homomorphism is surjective.

Although $BM^\text{diff}(R)$ is not a group in general, we can look at its subgroup of invertible elements. As we will see, this subgroup is an isomorphic image of $Br^*(R^D)$. 

The foundational results on differential Azumaya algebras on which these facts depend are presented in Section \ref{basics} below, while the results on the monoids are covered in Section \ref{monoids}. Section \ref{fields} looks at the case when $R$ is a differential field and at other examples.

We retain throughout the terminology and notations of this introduction. In particular, $R$ is always a commutative differential ring.

This work is a collaborative project of the author and the late Raymond T. Hoobler, undertaken with the idea of publishing it on the 50th anniversary of our first collaboration \cite{hm}. Except for the fact that Ray is no longer here to correct the proofs, in both senses, he is in every sense a coauthor.

\section{Differential Azumaya Algebras} \label{basics}

It is a basic property of an Azumaya algebra $A$  that derivations which are trivial on the center are inner.  For $a \in A$, we let $I_a$ denote inner derivation by $a$. This means that if we have one derivation of an Azumaya algebra we in effect have them all:

\begin{proposition} \label{AllDerivations} Let $A$ be a differential Azumaya $R$ algebra with derivation $D_A$. If $D_1$ is a derivation of $A$ which agrees with $D_R$ on $R$ then $D_1=D_A + I_a$ for some $a \in A$.
\end{proposition}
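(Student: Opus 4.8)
The plan is to reduce the statement to the quoted fact that center-trivial derivations of an Azumaya algebra are inner.

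The plan is to study the difference of the two derivations and show it is an inner derivation by invoking the quoted structural fact. First I would set $\delta = D_1 - D_A$, an additive endomorphism of $A$. Since both $D_1$ and $D_A$ satisfy the Leibniz rule, a one-line check gives
\[
\delta(xy) = D_1(xy) - D_A(xy) = \delta(x)y + x\,\delta(y),
\]
so $\delta$ is again a derivation of $A$. The point of taking the difference is that the two ``inhomogeneous'' parts coming from $D_R$ cancel.

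Next I would verify that $\delta$ is trivial on the center. Because $A$ is Azumaya over $R$ it is central, so $Z(A) = R$; and for any $r \in R$ we have $\delta(r) = D_1(r) - D_A(r) = D_R(r) - D_R(r) = 0$, since both $D_1$ and $D_A$ are assumed to restrict to $D_R$ on $R$. Hence $\delta$ vanishes identically on $Z(A)$. (As a byproduct this makes $\delta$ an $R$-linear map, since $\delta(ra) = \delta(r)a + r\,\delta(a) = r\,\delta(a)$, which is the natural setting for the inner-derivation statement.)

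Finally I would apply the basic property recalled just before the proposition: a derivation of an Azumaya algebra that is trivial on the center is inner. Applied to $\delta$, this yields an element $a \in A$ with $\delta = I_a$, and therefore $D_1 = D_A + I_a$, as desired. The argument is short because the real content is imported from the cited property of Azumaya algebras; the only thing one must be careful about is the identification $Z(A) = R$, which is exactly what guarantees that ``agrees with $D_R$ on $R$'' is the same as ``trivial on the center,'' so that the inner-derivation theorem is applicable. I expect that the verification that $\delta$ annihilates the whole center — rather than merely $R$ — is the step most worth stating explicitly, even though for a central algebra it is immediate.
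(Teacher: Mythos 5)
Your proof is correct and follows exactly the same route as the paper: take the difference $D_1 - D_A$, observe it is a derivation trivial on $R$ (the center, since $A$ is Azumaya), and apply the quoted fact that center-trivial derivations of Azumaya algebras are inner. The paper's version is just more terse; your explicit Leibniz check and the remark that $Z(A)=R$ are the details it leaves implicit.
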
 

\begin{proof} Since  $D_1$ and $D_A$ on $R$ are $D_R$, $D_1-D_A$ is a derivation of $A$ which is trivial on $R$ so $D_1-D_A=I_a$  for some $a \in A$. Thus $D_1=D_A + I_a$.
\end{proof}

Two important special cases of Proposition \ref{AllDerivations} occur when $A$ is the endomorphism ring of a projective module and when $A$ is a matrix ring.

We recall that a differential projective $R$ module is a differential $R$ module which is finitely generated and projective as an  $R$ module

\cite[Definition 1, p. 4338]{jm}.
If $P$ is such a differential projective module then $\text{End}_R(P)$ is a differential Azumaya algebra with induced derivation $D(S)=D_PS - SD_P$.

\begin{corollary} \label{EndoOfProjective} Let $P$ be a finitely generated projective $R$ module and let $D$ be a derivation of $\text{End}_R(P)$. Then there is a differential structure on $P$ such that the derivation induced from $D_P$ is $D$.
\end{corollary}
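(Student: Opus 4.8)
The plan is to produce one differential structure on $P$ by hand, compute the derivation it induces on $\text{End}_R(P)$, and then correct it by an inner derivation using Proposition \ref{AllDerivations}. First I would construct an initial differential structure. Since $P$ is finitely generated projective, choose $Q$ with $P \oplus Q$ free, say isomorphic to $F = R^n$, and let $\iota \colon P \to F$ and $\pi \colon F \to P$ be the inclusion and projection, so that $\pi \iota = \text{id}_P$. Equip $F$ with the componentwise derivation $D_F$, which applies $D_R$ in each coordinate and is visibly a differential structure on $F$. Define $D_P^0 = \pi D_F \iota$. The Leibniz rule for $D_P^0$ follows from that of $D_F$ together with the $R$-linearity of $\pi$ and $\iota$ and the identity $\pi \iota = \text{id}_P$; thus $D_P^0$ makes $P$ a differential $R$ module. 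Write $D_0$ for the derivation $S \mapsto D_P^0 S - S D_P^0$ it induces on $\text{End}_R(P)$.

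Next I would invoke Proposition \ref{AllDerivations}. Both $D$ and $D_0$ are derivations of the Azumaya algebra $\text{End}_R(P)$ restricting to $D_R$ on $R$, so $D = D_0 + I_b$ for some $b \in \text{End}_R(P)$. The key observation is that altering the initial connection by an $R$-linear endomorphism preserves the property of being a differential structure: for any $b \in \text{End}_R(P)$ the additive map $D_P = D_P^0 + b$ still satisfies $D_P(rp) = D_R(r)p + r D_P(p)$, because the added term $b$ is $R$-linear and so contributes nothing to the Leibniz defect.

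Finally I would compute the derivation induced by $D_P = D_P^0 + b$ and check that it equals $D$. Expanding $S \mapsto D_P S - S D_P = (D_P^0 + b)S - S(D_P^0 + b)$ gives $D_0(S) + (bS - Sb) = D_0(S) + I_b(S)$, so the induced derivation is $D_0 + I_b = D$, as required. The only genuine obstacle is the first step, the existence of an initial differential structure on $P$; everything afterward is formal. That step rests on the fact that a finitely generated projective module is a direct summand of a free one, which carries the evident componentwise derivation — this is precisely the algebraic analogue of equipping a vector bundle with a connection, with the projection $\pi$ playing the role of the splitting.
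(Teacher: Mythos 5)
Your proof is correct and follows essentially the same route as the paper: take one differential structure on $P$, compare the derivation it induces on $\operatorname{End}_R(P)$ with $D$ via Proposition \ref{AllDerivations}, and absorb the discrepancy $I_b$ into the module structure by replacing $D_P^0$ with $D_P^0+b$. The only difference is that where the paper cites \cite[Theorem 2, p. 4341]{jm} for the existence of an initial differential structure on $P$, you prove it directly by splitting $P$ off a free module and setting $D_P^0=\pi D_F\iota$, which makes the argument self-contained.
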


\begin{proof} By \cite[Theorem 2, p. 4341]{jm}, there is a differential structure $D_0$ on $P$. Let $D_1$ be the induced derivation on $\text{End}_R(P)$. By Proposition \ref{AllDerivations} $D=D_1+I_T$ for some $T \in \text{End}_R(P)$. Consider the additive endomorphism $D_0+T$. It is easy to check, or we can cite \cite[p. 4338]{jm}, that $D_0+T$ is also a differential structure on $P$, and the derivation it induces on $\text{End}_R(P)$  sends $S$ to $(D_0+T)S-S(D_0+T)=D_0S-SD_0+TS-ST=D_1(S)+I_T(S)=D(S)$.
\end{proof}

Let $P$ be a projective $R$ module. If an  $R$ algebra $A$ is a subalgebra of $\text{End}_R(P)$, we can view $P$ as a faithful $A$ module via the endomorphisms given by left multiplication, and conversely. Suppose $A$ is also Azumaya over $R$. Then we have the following consequence of Corollary \ref{EndoOfProjective}:

\begin{corollary} \label{projectivemodulesdifferential} Let $A$ be a differential Azumaya $R$ algebra, and let $P$ be an $A$ module which is finitely generated and projective as an $R$ module. Then there is a differential structure on $P$ making $P$ a differential $A$ module.
\end{corollary}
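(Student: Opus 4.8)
The plan is to reformulate ``$P$ is a differential $A$-module'' as a statement about the structure homomorphism and then to build the required differential structure on $P$ by correcting an arbitrary one. The $A$-module structure on $P$ is the same datum as an $R$-algebra homomorphism $\phi \colon A \to \text{End}_R(P)$, $\phi(a) = \lambda_a$, where $\lambda_a$ is left multiplication by $a$; write $B = \text{End}_R(P)$, which is Azumaya over $R$ since $P$ is finitely generated projective. Any differential structure $D_P$ on $P$ induces the derivation $\tilde{D}(S) = D_P S - S D_P$ on $B$, and rewriting the defining condition $D_P(am) = D_A(a)m + a D_P(m)$ in terms of $\lambda_a$ shows that $P$ is a differential $A$-module for $D_P$ \emph{if and only if} $\tilde{D}(\lambda_a) = \lambda_{D_A(a)}$ for all $a$, i.e.\ if and only if $\phi$ is a differential homomorphism for $\tilde{D}$. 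So the goal becomes: produce a differential structure on $P$ whose induced derivation on $B$ makes $\phi$ differential.

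First I would fix, using the result behind Corollary \ref{EndoOfProjective}, some differential structure $D_0$ on $P$, and let $\Delta$ be the derivation it induces on $B$. By Corollary \ref{EndoOfProjective} every derivation of $B$ comes from a differential structure on $P$; more precisely every differential structure on $P$ has the form $D_0 + T$ with $T \in B$, and $D_0 + T$ induces $\Delta + I_T$ on $B$, exactly as in the computation proving that corollary. Thus it suffices to find $T \in B$ with $(\Delta + I_T)(\lambda_a) = \lambda_{D_A(a)}$ for every $a \in A$.

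Next I would measure the failure of $\phi$ to be differential for $\Delta$ by the map $f \colon A \to B$, $f(a) = \lambda_{D_A(a)} - \Delta(\lambda_a)$. A short computation, using that $\lambda$ is multiplicative and that both $\Delta$ and $D_A$ are derivations, shows that $f$ is an $R$-derivation of $A$ into the $A$-bimodule $B$, where $A$ acts on $B$ through $\phi$; it vanishes on $R$ because $\Delta(\lambda_r) = \lambda_{D_R(r)}$ for $r \in R$. Since $A$ is Azumaya, hence separable over $R$, every such derivation into a bimodule is inner, so there is $T \in B$ with $f(a) = I_T(\lambda_a) = T\lambda_a - \lambda_a T$ for all $a$. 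For this $T$ we get $(\Delta + I_T)(\lambda_a) = \Delta(\lambda_a) + f(a) = \lambda_{D_A(a)}$, so $D_P = D_0 + T$ is the desired differential structure.

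The main obstacle is the innerness of $f$. The property quoted just before Proposition \ref{AllDerivations} concerns derivations of an Azumaya algebra into \emph{itself}, whereas here the target bimodule is $B = \text{End}_R(P)$ rather than $A$; so I would invoke the homological form of separability, namely $H^1(A,M) = 0$ for every $A$-bimodule $M$, which follows from the separability idempotent of the Azumaya algebra $A$. Everything else is routine bookkeeping. As an alternative, when $P$ is faithful so that $\phi$ is injective, one can bypass $f$ altogether: then $A$ is a differential Azumaya subalgebra of $B$, Knus's Theorem supplies a derivation of $B$ extending $D_A$, and Corollary \ref{EndoOfProjective} realizes it by a differential structure on $P$. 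The separability argument has the advantage of not requiring faithfulness, which is why I would present it as the main line.
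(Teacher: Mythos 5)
Your argument is correct, but your main line is genuinely different from the paper's; in fact the paper's proof is precisely what you describe as your ``alternative'': it regards the structure map $A \to \mathrm{End}_R(P)$ as an inclusion, invokes Knus's Theorem to extend $D_A$ to a derivation of $\mathrm{End}_R(P)$, and then applies Corollary \ref{EndoOfProjective} to realize that derivation by a differential structure on $P$, for which the Leibniz rule $D(ap)=D(a)p+aD(p)$ is then automatic. Your main line instead fixes an arbitrary differential structure $D_0$ on $P$, measures the failure of the structure map to be differential by the $R$-linear derivation $f(a)=\lambda_{D_A(a)}-\Delta(\lambda_a)$ with values in the $A$-bimodule $\mathrm{End}_R(P)$, and kills it by an inner correction, using that $H^1(A,M)=0$ for every bimodule $M$ over a separable algebra. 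The trade-off: you must import the Hochschild-cohomological form of separability, a standard fact but stronger than the one the paper quotes before Proposition \ref{AllDerivations} (which concerns only derivations of an Azumaya algebra into itself), whereas the paper stays within its already-assembled toolkit of Knus's Theorem plus Corollary \ref{EndoOfProjective}. In return, your proof never needs the structure map $A \to \mathrm{End}_R(P)$ to be injective, so it proves the corollary exactly as stated, including non-faithful $P$; the paper's proof tacitly assumes faithfulness when it treats the map as an inclusion (a repairable gap---the annihilator of a finitely generated projective module is generated by an idempotent, which is necessarily a constant, so one can split $R$ and reduce to the faithful case---but a repair your argument does not require). Both routes rest on the same two supporting facts from the paper: the existence of some differential structure $D_0$ on $P$, and that perturbing it to $D_0+T$ perturbs the induced derivation to $\Delta+I_T$.
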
 

\begin{proof} As noted, $P$ gives an algebra embedding $A \to \text{End}_R(P)$, which we regard as an inclusion. Both $\text{End}_R(P)$ and its subalgebra $A$ are Azumaya over $R$. 

By \cite[Th\'eor\`eme 3, p. 639]{k}, the derivation of $A$ extends to a derivation of $\text{End}_R(P)$, making the inclusion one of differential algebras. By Corollary \ref{EndoOfProjective}, there is a differential $R$ module structure on $P$  which induces the given derivation on $\text{End}_R(P)$. Then for  $p \in P$ and $a \in A$ (or indeed for any endomorphism of $P$) $D(ap)=D(a)p+aD(p)$.
\end{proof}

If $M_n(R)$ is a matrix ring over $R$, then applying $D_R$ entry-wise gives a derivation, which we denote $(\cdot )^\prime$. And once we have one derivation, by Proposition \ref{AllDerivations}, we have then all.

\begin{corollary} \label{matrixderivation} Let $D$ be a derivation of $M_n(R)$. Then there is a matrix $X \in M_n(R)$ such that $D(Y)=Y^\prime + XY-YX$ for $Y \in M_n(R)$. Conversely, any matrix $Z$ defines a derivation $\partial_Z$ of $M_n(R)$ by the formula $\partial_Z(Y)=Y^\prime +ZY-YZ$.
\end{corollary}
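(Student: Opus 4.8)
The plan is to prove both directions of Corollary \ref{matrixderivation} by invoking Proposition \ref{AllDerivations} for the first (harder) direction and by direct verification for the second (converse).

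For the forward direction, I would begin by observing that the entry-wise derivation $(\cdot)^\prime$ is already known to be a derivation of $M_n(R)$ extending $D_R$ on the center $R$ (viewed as scalar matrices). Since $M_n(R) = \operatorname{End}_R(R^n)$ is Azumaya over $R$ and $(\cdot)^\prime$ restricts to $D_R$ on $R$, Proposition \ref{AllDerivations} applies to the given derivation $D$: any other derivation agreeing with $D_R$ on $R$ differs from $(\cdot)^\prime$ by an inner derivation. Thus there exists $X \in M_n(R)$ with $D = (\cdot)^\prime + I_X$, which unwinds to the stated formula $D(Y) = Y^\prime + XY - YX$ for all $Y \in M_n(R)$. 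The only point requiring care is confirming that $D$ genuinely restricts to $D_R$ on the scalar matrices so that Proposition \ref{AllDerivations} is applicable; if the hypothesis is read as $D$ merely being an abstract derivation of the ring $M_n(R)$, one should note that its restriction to the center is a derivation of $R$, and the intended reading is that this restriction is $D_R$.

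For the converse, I would take an arbitrary matrix $Z \in M_n(R)$ and verify directly that $\partial_Z(Y) = Y^\prime + ZY - YZ$ is a derivation. This is a routine check: $\partial_Z$ is additive because each of the three terms is additive in $Y$, and one verifies the Leibniz rule $\partial_Z(Y_1 Y_2) = \partial_Z(Y_1)Y_2 + Y_1 \partial_Z(Y_2)$ by expanding both sides, using that $(\cdot)^\prime$ satisfies the Leibniz rule entry-wise and that $I_Z(Y) = ZY - YZ$ is an inner derivation. The two contributions combine additively, and the sum of two derivations is again a derivation, so no real obstacle arises here.

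The main obstacle, such as it is, lies entirely in the forward direction and is essentially conceptual rather than computational: one must recognize that $(\cdot)^\prime$ serves as the distinguished ``base" derivation to which Proposition \ref{AllDerivations} is applied, and that the difference of two derivations agreeing on the center is inner precisely because $M_n(R)$ is Azumaya. Once this is identified, the formula drops out immediately with $a = X$ in the notation $I_a$ of the proposition. Everything else is bookkeeping, and no separate argument is needed beyond citing the already-established Proposition \ref{AllDerivations}.
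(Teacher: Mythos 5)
Your proof is correct and matches the paper's own argument: the forward direction is exactly the paper's application of Proposition \ref{AllDerivations} with $(\cdot)^\prime$ as the base derivation, and the converse is the paper's observation that the sum of $(\cdot)^\prime$ and the inner derivation $I_Z$ is a derivation restricting to $D_R$ on $R$. Your added remark that $D$ must be read as agreeing with $D_R$ on the center is a reasonable clarification of an implicit hypothesis, but it does not change the substance.
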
 

\begin{proof} By Proposition \ref{AllDerivations}, $D=(\cdot )^\prime + I_X$ for some $X \in M_n(R)$, which is the first assertion. The second follows since the sum of the two derivations $(\cdot )^\prime$ and $I_Z$ is a derivation which restricts to $D_R$ on $R$.
\end{proof}

\begin{notation} \label{derivationofmatrix} For $A \in M_n(R)$ denote $M_n(R)$ with the derivation $\partial_A$ by $(M_n(R), A).$
\end{notation}

An important special case is where the matrix $A$ in Corollary \ref{matrixderivation} is zero: the derivation $\partial_0$ is just another name for $(\cdot )^\prime$. Because $A=0$, we will refer to this as the \emph{trivial} derivation on $M_n(R)$.

The matrix algebra $M_n(R)$ is isomorphic to the endomorphism ring of the free module $R^n$ (column vectors). By Corollary \ref{EndoOfProjective}, the differential structure $(M_n(R), A)$ is induced by a differential structure on $R^n$. It is easy to see, using the proof of the Corollary as a guide, that in this structure the derivative of $x$ is $x^\prime + Ax$, where $x^\prime$ means coordinate wise derivation by $D_R$. We abbreviate this structure $(R^n,A)$. Note that this is the dual of the similar structure in \cite[p. 4338]{jm2}, so we can use the calculations from \cite{jm2} after changing rows to columns. In particular, if $Y \in M_n(R)$ is invertible and $Y^\prime=-AY$ then left multiplication by $Y$ defines a differential isomorphism $(R^n, 0) \to (R^n, A)$. Taking endomorphism rings, this means that conjugation by $Y$ defines a differential isomorphism $(M_n(R), 0) \to (M_n(R), A)$.

Conversely, suppose we have a differential isomorphism $(M_n(R), 0) \to (M_n(R), Z)$ and suppose the isomorphism is given by conjugation by the invertible matrix $T$. Then $T$ satisfies a differential equation of the form $T^\prime = -(Z +cI)T$ where $c \in R$. (In \cite[Proposition 2, p.1923]{jm} we show how to make $T$, there called $H$, determinate, using the fact that the base ring $R$ is a field of characteristic zero. In the general case we need to keep the possibility of any $c$.)

Every Azumaya algebra appears as a tensor factor in a matrix ring: this follows from Bass's Theorem, \cite[Proposition 4.6, p. 476]{b2}, that if $P$ is a faithful finitely generated projective $R$ module then there is a faithful finitely generated projective $R$ module $Q$ such that $P \otimes Q$ is free. As $A$ is a subalgebra and tensor factor of $\text{End}_R(A)$ by left multiplications and since $A$ is faithful and finitely generated projective as an $R$ module there is a $Q$ as above, so 
$\text{End}_R(A) \otimes \text{End}_R(Q)$ is a matrix algebra. However for a differential Azumaya algebra to appear as a tensor factor of a matrix ring with trivial derivation is a very restrictive condition, as we see in Theorem \ref{factorinduced}.

The proof of this Theorem will use the result, \cite[Theorem 1, p. 4340]{jm2}, that a differential projective module $P$ is a differential direct summand of $R^n$, where the latter has the component-wise differential structure, if and only if $P$ is of the form $R \otimes_{R^D} P_0$ where $P_0$ is a finitely generated projective $R^D$ module. The differential structure on $R \otimes_{R^D}P_0$ is given by $D(r \otimes p)=D(r) \otimes p$. It is further shown in \cite[ Lemma 1, p. 4340]{jm2}     that   if $P = R \otimes_{R^D} P_0$ for finitely generated projective $P_0$ then $P_0=P^D$.

\begin{theorem} \label{factorinduced} Let $A$ be a differential Azumaya algebra and suppose there is a differential Azumaya algebra $B$ such that $A \otimes_R B$ is isomorphic, as a differential $R$ algebra,  to $M_n(R)$ with the trivial derivation. Then $A^D$ is an Azumaya algebra  over $R^D$ (in fact $A^D \otimes B^D$ is isomorphic to $M_n(R^D)$) and $R \otimes_{R^D}A^D \to A$ by $r \otimes a \mapsto ra$ is an isomorphism. Conversely, if $A_0$ is an Azumaya algebra over $R^D$ then the Azumaya $R$ algebra $R \otimes_{R^D} A_0$ is a differential tensor factor of an $R$ matrix algebra with trivial derivation.
\end{theorem}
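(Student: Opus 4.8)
The plan is to reduce the forward direction entirely to the module-theoretic results of \cite{jm2} quoted above, and to obtain the converse by inducing a matrix splitting up from $R^D$. Throughout I identify $A\otimes_R B$ with $(M_n(R),0)=\operatorname{End}_R(R^n)$ via the given differential isomorphism, so that $A=A\otimes 1$ and $B=1\otimes B$ become differential $R$-subalgebras of $(M_n(R),0)$ which are mutual centralizers, each carrying the restriction of the entrywise derivative $(\cdot)'$; in particular $A$ is closed under $(\cdot)'$ and $A^D=A\cap M_n(R^D)$, and similarly for $B$. As a differential $R$-module, $(M_n(R),0)=\operatorname{End}_R(R^n)$ is simply $R^{n^2}$ with its component-wise structure, a trivial free module. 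The heart of the matter is to show that $A$ (and symmetrically $B$) is a \emph{differential} direct summand of this trivial free module; granting this, the quoted results of \cite{jm2} give at once that the multiplication map $R\otimes_{R^D}A^D\to A$ is an isomorphism with $A^D$ finitely generated projective over $R^D$. To split $A$ off as an $R$-module I would use that $B$, being Azumaya, is separable, hence has a separability idempotent $e_B=\sum_j s_j\otimes t_j$ with $\sum_j s_jt_j=1$: the $R$-linear map $\rho(m)=\sum_j s_j\,m\,t_j$ is then a projection of $M_n(R)$ onto $C_{M_n(R)}(B)=A$ restricting to the identity on $A$. Thus $A$ is an $R$-module direct summand of $M_n(R)$, as well as a differential submodule of it.

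\textbf{Main obstacle.} The essential remaining point is to upgrade this $R$-splitting to a differential one. A direct computation shows that the differential defect $D\rho-\rho D$ is the operator $m\mapsto\sum_j\big((Ds_j)\,m\,t_j+s_j\,m\,(Dt_j)\big)$ attached to $De_B$; since $De_B$ again lies in the bimodule centralizer and satisfies $\mu(De_B)=0$, this defect maps $M_n(R)$ into $A$ and vanishes on $A$, so it represents the obstruction class of the differential extension $0\to A\to M_n(R)\to M_n(R)/A\to 0$. Proving that this class vanishes---equivalently, that the differential submodule $A$, being an $R$-module summand of a trivial free module with the induced derivation, is necessarily of the form $R\otimes_{R^D}A^D$---is exactly the descent content of \cite{jm2}, and is where essentially all the work lies. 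When $R$ is faithfully flat over $R^D$ (for instance $R$ a differential field) it follows from faithfully flat descent along $R^D\to R$; in general it is the role played by the quoted theorem. This is the step I expect to be the crux.

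Once $A=R\otimes_{R^D}A^D$ and $B=R\otimes_{R^D}B^D$ are established, the rest is formal. Then $M_n(R)=A\otimes_R B\cong R\otimes_{R^D}\big(A^D\otimes_{R^D}B^D\big)$, and applying the invariants functor together with the quoted Lemma of \cite{jm2} (which recovers the $R^D$-factor as the $D$-invariants) yields $A^D\otimes_{R^D}B^D\cong M_n(R)^D=M_n(R^D)$. To see that $A^D$ is Azumaya over $R^D$ I would argue fibrewise: at each point of $\operatorname{Spec}R^D$ with residue field $\kappa$, reduction gives $(A^D\otimes\kappa)\otimes_\kappa(B^D\otimes\kappa)\cong M_n(\kappa)$, and a tensor factorization of a matrix algebra over a field forces each factor to be central simple over $\kappa$; since $A^D$ is finitely generated projective over $R^D$ and central simple at every residue field, it is Azumaya. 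The isomorphism $R\otimes_{R^D}A^D\to A$ of the statement is precisely the multiplication map already produced.

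For the converse, let $A_0$ be Azumaya over $R^D$. By the fact recorded above that every Azumaya algebra is a tensor factor of a matrix ring (applied over $R^D$), choose an Azumaya $R^D$-algebra $B_0$ and an $N$ with $A_0\otimes_{R^D}B_0\cong M_N(R^D)$. Equip $A:=R\otimes_{R^D}A_0$ and $B:=R\otimes_{R^D}B_0$ with the derivations $D_R\otimes 1$; these are differential Azumaya $R$-algebras (Azumaya by base change, and of the induced type). Then
\[
A\otimes_R B=(R\otimes_{R^D}A_0)\otimes_R(R\otimes_{R^D}B_0)\cong R\otimes_{R^D}(A_0\otimes_{R^D}B_0)\cong R\otimes_{R^D}M_N(R^D),
\]
and the derivation $D_R\otimes 1$ on the right-hand side is exactly the entrywise derivative on $M_N(R)$, i.e.\ the trivial one. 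Hence $R\otimes_{R^D}A_0$ is a differential tensor factor of $(M_N(R),0)$, as required.
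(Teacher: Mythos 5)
Your overall strategy is the same as the paper's---realize $A$ as a differential direct summand of the trivial differential module $(M_n(R),0)\cong (R^{n^2},0)$ and then quote the induced-module theorem of \cite{jm2}---but your execution has a genuine gap at exactly the step you flag as the crux, and that gap is not covered by the result you cite. The quoted theorem of \cite{jm2} requires $A$ to be a \emph{differential} direct summand, i.e.\ the splitting itself must commute with the derivations. Your separability-idempotent map $\rho(m)=\sum_j s_j m t_j$ onto $A=C_{M_n(R)}(B)$ is only an $R$-linear projection; the statement you then lean on---that a $D$-stable submodule which is merely an $R$-module summand of a trivial free differential module is automatically of the form $R\otimes_{R^D}A^D$---has a strictly weaker hypothesis than \cite[Theorem 1]{jm2} and is nowhere proved: you never show that your obstruction class $[D\rho-\rho D]$ vanishes. (The fallback appeal to faithfully flat descent along $R^D\to R$ is also not a proof: $R$ need not be flat over $R^D$, and even over a differential field the truth of this statement comes from a Wronskian-type argument, not from descent.) So the forward direction is not established as written.

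The paper closes exactly this hole with a small but essential trick that you are missing: instead of projecting $M_n(R)$ onto $A$, project $B$ onto $R$. Since the derivation on $M_n(R)$ is entrywise, the $(1,1)$-entry map $p_{11}:M_n(R)\to R$ is a homomorphism of differential modules; its restriction $p:B\to R$ is a differential surjection (it sends $1$ to $1$) admitting the differential section $r\mapsto r\cdot 1_B$, whence $B=R\oplus B_0$ as differential modules with $B_0=\ker p$. Tensoring with $A$ over $R$ gives
\[
M_n(R)\;=\;A\otimes_R B\;=\;(A\otimes_R R)\,\oplus\,(A\otimes_R B_0)
\]
as differential modules, so $A$ \emph{is} a differential direct summand of the trivial module, and \cite[Theorem 1]{jm2} then applies legitimately to give that $A^D$ is finitely generated projective over $R^D$ and $R\otimes_{R^D}A^D\to A$ is an isomorphism. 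With that repair, the remainder of your argument goes through: your fibrewise central-simplicity argument that $A^D$ is Azumaya is a workable substitute for the paper's device of passing to constants in the differential isomorphism $R\otimes_{R^D}(A^D\otimes B^D)\to M_n(R)$, and your proof of the converse is the same as the paper's.
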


\begin{proof} We assume both $A$ and $B$ are subalgebras of $M_n(R)$. Consider the map $p_{11}: M_n(R) \to R$ given by projection on the entry of the first row and column of a matrix. Because the derivation on  $M_n(R)$ is the trivial one, $p_{11}$ is a homomorphism of differential modules which sends the identity to one. Let $p$ be the restriction of $p_{11}$ to $B$. The kernel $B_0$ of $p$ is then a differential module, and since $p$ is onto we have $B = R \oplus B_0$ as differential modules. Thus as differential modules $M_n(R)$ is $(A \otimes_R R) \oplus (A \otimes_R B_0)$. In particular, $A$ is a differential direct summand of $M_n(R)$, which as a differential module is $R$ free with coordinate-wise derivation. By \cite[Theorem 1, p. 4340]{jm2}, this means that $A^D$ is a finitely generated projective $R^D$ module and $R \otimes_{R^D}A^D \to A$ is an isomorphism. The same applies to $B^D$. Note that both $A^D$ and $B^D$ are subalgebras of $M_n(R)^D=M_n(R^D)$. To see that $A^D$ is Azumaya, we consider the multiplication map $m_0: A^D \otimes B^D \to M_n(R^D)$. 
Then $1 \otimes m_0$, under the multiplication isomorphisms $R \otimes A^D \to A$ and $R \otimes B^D \to B$, becomes the multiplication map $m: A \otimes B \to M_n(R)$, which is a differential algebra isomorphism. Since $1 \otimes m_0: R \otimes_{R^D} (A^D \otimes B^D) \to M_n(R)$ is a differential algebra isomorphism, it induces an isomorphism of the constants of its domain, namely $A^D \otimes B^D$, and the constants of its range, $M_n(R)^D=M_n(R^D)$.
Thus $A^D$ is an Azumaya $R^D$ algebra. The final statement is a consequence of the fact, applied to $R^D$,  that every Azumaya algebra is a tensor factor of a matrix algebra, and the fact that $R \otimes M_n(R^D)$ is isomorphic to $M_n(R)$ with the trivial derivation as a differential $R$ algebra.
\end{proof}

If $A$ is a differential Azumaya $R$ algebra, and $P$ is an $A$ module which is finitely generated and free of rank $n$ as an $R$ module, by Corollary \ref{projectivemodulesdifferential} $P$ has a differential module structure as an $A$ module. If we regard $R^n$ (column $n$ tuples) as a left $M_n(R)$ module via matrix multiplication, then the differential structure of $P$ induces a derivation of $M_n(R)$, and the differential inclusion $A \to M_n(R)$ implies that $A$ is a differential tensor factor of $M_n(R)$, the other factor being the commutator of $A$, which is also a differential subalgebra. By Theorem \ref{factorinduced}, if the  derivation of $M_n(R)$ is trivial, $A$ is induced up from $R^D$. Hence:

\begin{corollary} \label{constantbasis} Let $A$ be a differential Azumaya $R$ algebra. Then $A$ has a faithful differential module $P$, finitely generated and projective over $R$, with an $R$ basis of constants (and hence  free), if and only if $A$ is isomorphic, as a differential algebra,  to $R \otimes_{R^D} A_0$ for some Azumaya $R^D$ algebra $A_0$.
\end{corollary}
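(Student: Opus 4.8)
The plan is to prove the two implications separately, with Theorem \ref{factorinduced} doing essentially all of the structural work; the role of the corollary is to translate the module-theoretic hypothesis of a ``basis of constants'' into the tensor-factor hypothesis of that theorem. The bridge in both directions is the observation, implicit in the discussion following Corollary \ref{matrixderivation}, that a free differential module of the form $(R^n,0)$ (coordinate-wise derivation, standard basis consisting of constants) induces precisely the trivial derivation on $\text{End}_R(R^n)=M_n(R)$: for $S \in M_n(R)$ and $x \in R^n$ one has $(D_PS-SD_P)(x)=(Sx)'-Sx'=S'x$, so the induced derivation is entry-wise.

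For the forward implication, suppose $P$ is a faithful differential $A$ module, finitely generated projective over $R$, possessing an $R$ basis of constants. Such a basis identifies $P$ with $(R^n,0)$ as a differential $R$ module, whence by the bridge the derivation induced on $\text{End}_R(P)=M_n(R)$ is trivial. Faithfulness gives an algebra embedding $A \to M_n(R)$, and because $P$ is a differential $A$ module this embedding is one of differential algebras, realizing $A$ as a differential subalgebra of $(M_n(R),0)$. Since $A$ is Azumaya, $M_n(R)$ decomposes as $A \otimes_R C$ with $C$ the centralizer of $A$; I would then check that $C$ is stable under the trivial derivation (if $ca=ac$ for all $a\in A$, applying $D$ and using $D(a)\in A$ yields $D(c)a=aD(c)$), so that $A \otimes_R C \cong (M_n(R),0)$ is an isomorphism of differential algebras. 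Theorem \ref{factorinduced} then applies and returns that $A^D$ is Azumaya over $R^D$ with $R \otimes_{R^D} A^D \to A$ an isomorphism; taking $A_0=A^D$ finishes this direction, the map being differential because $A^D$ consists of constants.

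For the converse, suppose $A \cong R \otimes_{R^D} A_0$ differentially, with $A_0$ Azumaya over $R^D$. The converse half of Theorem \ref{factorinduced} exhibits $A$ as a differential tensor factor of a matrix algebra with trivial derivation, say $A \otimes_R B \cong (M_n(R),0)$. The standard action of $M_n(R)$ on column vectors endows $R^n$ with the differential structure $(R^n,0)$, which by construction has the standard basis as a basis of constants. Restricting this action along the differential embedding $A \to A \otimes_R B \cong (M_n(R),0)$ makes $R^n$ a faithful differential $A$ module --- faithful since $A$ embeds into $\text{End}_R(R^n)$, and differential since the embedding is differential and the derivation on $M_n(R)$ is trivial --- with an $R$ basis of constants, as required.

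The point where care is needed is not any single hard step but the correct bookkeeping of ``differential'': verifying that the embedding $A \to M_n(R)$ is differential exactly when $P$ carries a basis of constants, and that the complementary tensor factor (the centralizer) is itself a differential subalgebra. Once these are in place the two implications are immediate from the two halves of Theorem \ref{factorinduced}, so I anticipate no serious obstacle beyond this translation.
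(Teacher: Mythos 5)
Your proof is correct and takes essentially the same route as the paper: the forward direction (constant basis yields a differential embedding $A \to (M_n(R),0)$, the centralizer is a differential complementary tensor factor, and Theorem \ref{factorinduced} then gives $A \cong R \otimes_{R^D} A^D$) is exactly the argument the paper sets up in the paragraph preceding the corollary. The only difference is cosmetic and lies in the converse: the paper base-changes a faithful finitely generated free $A_0$-module $P_0$ to $P = R \otimes_{R^D} P_0$, whereas you invoke the converse half of Theorem \ref{factorinduced} and restrict the column module $(R^n,0)$ along the differential embedding $A \to (M_n(R),0)$; both rest on the same underlying fact that Azumaya algebras are tensor factors of matrix algebras.
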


\begin{proof}  If  $A$ has a faithful differential module $P$, finitely generated and projective over $R$, with an $R$ basis of constants. then $A$ is a differential subalgebra of $M_n(R)$ with the trivial derivation, and hence of the desired form. Convesely, if $A$ is isomorphic to $R \otimes_{R^D} A_0$ for some Azumaya $R^D$ algebra $A_0$ and if $P_0$ is a faithful $A_0$ module, finitely generated and free as an $R^D$ module, then $P=R \otimes P_0$ is a faithful $A$ module, finitely generated and projective over $R$, with an $R$ basis of constants.
\end{proof}

\section{Brauer Monoids}  \label {monoids}

Throughout this section we will be concerned with monoids of isomorphism classes and various quotients thereof. To fix terminology, by a monoid $M$ we mean a commutative semigroup with identity $1$, with the operation written multiplicatively. If $N \subseteq M$ is a submonoid, by $M/N$ we mean the set of equivalence classes on $M$ under the equivalence relation $m_1 \sim m_2$ if there are $n_1, n_2 \in N$ such that $m_1n_1=m_2n_2$. Products of equivalent elements are equivalent, which defines a commutative operation on $M/N$ by multiplying representatives of equivalence classes, and the equivalence class of the identity is an identity. Thus $M/N$ is a monoid.  Note that we can have $M/N=1$ but $N \neq M$, for example $\mathbb Q^*/\mathbb Z^\times$. If $P \subset Q$ are submonoids of $M$ then $Q/P$ is a submonoid of $M/P$ and $(M/P)/(Q/P)$ is isomorphic to $M/Q$ (``third isomorphism theorem") for monoids.

The set of invertible elements $U(M)$ of $M$ is its maximal subgroup. The set elements $a \in M$ whose equivalence classes in $M/N$ are invertible is $\{ a \in M \vert \exists b \in M, n \in N \text{ such that } abn \in N\}$. 

We will be considering the following monoids:

\begin{definition} \label{themonoids} 
$MA(R)$ denotes the set of isomorphism classes of Azumaya $R$ algebras with the operation induced by tensor product and the identity the isomorphism class of $R$.

$MA^\text{diff}(R)$  denotes the set of isomorphism classes of differential Azumaya $R$ algebras with the operation induced by tensor product and the identity the isomorphism class of $R$.

$ME(R)$ denotes the submonoid of $MA(R)$ whose elements are isomorphism classes of endomorphism rings of faithful finitely generated projective $R$ modules.

$ME^\text{diff}(R)$ denotes the submonoid of $MA^\text{diff}(R)$ whose elements are isomorphism classes of differential Azumaya algebras which as $R$ algebras are endomorphism rings of faithful finitely generated projective modules.

$MM(R)$ denotes the submonoid of $MA(R)$ whose elements are isomorphism classes of matrix algebras.

$MM^\text{diff}(R)$ denotes the submonoid of $MA^\text{diff}(R)$ whose elements are isomorphism classes of Azumaya algebras of the form $(M_n(R), Z)$.

$MM_0^\text{diff}(R)$ denotes the submonoid of $MA^\text{diff}$ whose elements are isomorphism classes of Azumaya algebras of the form $(M_n(R), 0)$.

$BM(R)$ and $Br^*(R)$ both denote $MA(R)/MM(R)$

$BM^\text{diff}(R)$ denotes $MA^\text{diff}(R)/MM_0^\text{diff}(R)$

$EBM^\text{diff}(R)$ denotes  $ME^\text{diff}(R)/MM_0^\text{diff}(R)$

$MBM^\text{diff}(R)$ denotes $MM^\text{diff}(R)/MM_0^\text{diff}(R)$

$BM(R)$ is called the \emph{Brauer monoid} of $R$.

$BM^\text{diff}(R)$ is called the \emph{differential Brauer monoid} of $R$.

$EBM^\text{diff}(R)$ is called the \emph{endomorphism differential Brauer monoid} of $R$.

$MBM^\text{diff}(R)$ is called the \emph{matrix diffferential Brauer monoid} of $R$.
\end{definition}

All of the monoids in Definition \ref{themonoids} are functors.

All of the monoids of equivalence classes discussed in the introduction can be viewed as quotient monoids: the set of equivalence classes of Azumaya algebras under the relation of Brauer equivalence (the Brauer group $Br(R)$) is the quotient $MA(R)/ME(R)$; and the set of equivalence classes under fine Brauer equivalence, which we also termed matrix Brauer equivalence, (the fine Brauer group $Br^*(R)$) is the quotient $BM(R)=MA(R)/MM(R)$. 

The set of equivalence classes of differential Azumaya algebras under differential Brauer equivalence is $MA^\text{diff}(R)/ME^\text{diff}(R)$; the set of equivalence classes under
differential fine Brauer equivalence is $MA^\text{diff}(R)/MM^\text{diff}(R)$; and the set of equivalence classes of differential Azumaya algebras under the relation of differential matrix Brauer equivalence (the differential Brauer monoid $BM^\text{diff}(R)$) is $MA^\text{diff}(R)/MM_0^\text{diff}(R)$.

Since matrix rings are a special case of endomorphism rings of projective modules, fine Brauer equivalence implies Brauer equivalence, so we have a surjection $Br^*(R) \to Br(R)$; this is a group homomorphism with an identifiable kernel.

\begin{proposition} \label{Brauermonoid} The Brauer monoid $BM(R)=Br^*(R)$ is a group. The group homomorphism $Br^*(R) \to Br(R)$ is surjective with kernel $ME(R)/MM(R)$. In particular, the latter is a group.
\end{proposition}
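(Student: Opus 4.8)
The plan is to handle the three assertions in turn: set up the comparison homomorphism from the inclusion $MM(R)\subseteq ME(R)$, extract the group structure from the standard Azumaya identity $A\otimes_R A^{\mathrm{op}}\cong\mathrm{End}_R(A)$, and then read off the kernel directly from the definitions of the two quotient monoids.

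First I would produce the homomorphism. A matrix algebra $M_n(R)=\mathrm{End}_R(R^n)$ is the endomorphism ring of a free, hence faithful finitely generated projective, module, so $MM(R)\subseteq ME(R)$ inside $MA(R)$. A finer congruence maps onto a coarser one, so the identity on representatives descends to a well-defined map $MA(R)/MM(R)\to MA(R)/ME(R)$, that is $Br^*(R)\to Br(R)$; it respects the operation since both are induced from $\otimes_R$, and it is surjective because every Azumaya class is represented in $MA(R)$ and maps to its own $ME$-class. Here ``kernel'' means the preimage of the identity class $[R]$.

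Next, the group property. For Azumaya $A$ the multiplication map gives $A\otimes_R A^{\mathrm{op}}\cong\mathrm{End}_R(A)$, the endomorphism ring of the underlying faithful finitely generated projective module $A$; thus $[A]\,[A^{\mathrm{op}}]=[\mathrm{End}_R(A)]$ in $Br^*(R)$, and the proposed inverse of $[A]$ is $[A^{\mathrm{op}}]$. Everything then comes down to the key step: $\mathrm{End}_R(A)$ is $MM$-equivalent to $R$, i.e. $M_m(\mathrm{End}_R(A))\cong M_n(R)$ for suitable $m,n$. Writing $M_m(\mathrm{End}_R(A))=\mathrm{End}_R(A^{\oplus m})$ and using the Morita fact that $\mathrm{End}_R(X)\cong\mathrm{End}_R(Y)$ as $R$-algebras iff $Y\cong X\otimes_R L$ for an invertible module $L$, this asks that some $A^{\oplus m}$ be isomorphic to $L^{\oplus n}$ for a line bundle $L$. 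I expect this to be the main obstacle: it is precisely the stable triviality, up to a Picard twist, of the module $A$, and it is the feature with no content over a field, where every module is free, yet must be argued in general. A useful reduction is that the reduced trace form makes $A$ self-dual and one checks $\det_R(A)$ is trivial (the gluing data for $A$ lie in $PGL_{n_0}$, which acts on $M_{n_0}$ with determinant one), so the obstruction already lives in the rank-and-determinant-zero part of $K_0(R)$; over a field, a local ring, or a Dedekind domain (where $A$ is in fact free) the claim is then immediate.

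Finally, the kernel. An element $[A]$ lies over $[R]$ exactly when $A$ is $ME$-equivalent to $R$, i.e. $A\otimes_R\mathrm{End}_R(P)\cong\mathrm{End}_R(Q)$ for progenerators $P,Q$; taking $P=Q=R$ shows each $[\mathrm{End}_R(P)]$ lies in the kernel, so the canonical map $ME(R)/MM(R)\to Br^*(R)$ lands there. I would check this map is injective, since two endomorphism classes that become $MM$-equivalent in $MA(R)$ are already so within $ME(R)$ (the defining relation uses only matrix algebras), and surjective onto the kernel, because $A\otimes_R\mathrm{End}_R(P)\cong\mathrm{End}_R(Q)$ exhibits $[A]=[\mathrm{End}_R(Q)]\,[\mathrm{End}_R(P)]^{-1}$, the inverse being available once $Br^*(R)$ is known to be a group. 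This identifies the kernel with $ME(R)/MM(R)$; as the kernel of a homomorphism out of the group $Br^*(R)$ (equivalently a submonoid of a group closed under the now-available inverses), it is itself a group, which is the final ``in particular.''
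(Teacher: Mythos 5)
Your reduction of the group structure to the claim that $\mathrm{End}_R(A)$ is stably matrix --- i.e.\ that $[A][A^{\mathrm{op}}]=[\mathrm{End}_R(A)]$ is already the identity of $Br^*(R)$ --- is a genuine gap, and it is exactly the step you yourself flag as ``the main obstacle.'' The statement you need, $A^{\oplus m}\cong L^{\oplus n}$ for some invertible module $L$, does not follow from your determinant argument: triviality of rank and determinant only places the obstruction in a certain subgroup of $K_0(R)$, it does not make it vanish, and in any case $A^{\oplus m}\cong L^{\oplus n}$ is an honest isomorphism, not a stable $K_0$ identity. Your special cases (fields, local rings, Dedekind domains) do not cover general $R$. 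Worse, the claim is not merely unproven; it is precisely the kind of statement the proposition is designed \emph{not} to assume. In the paper's terms, $[\mathrm{End}_R(A)]$ is a class in $ME(R)/MM(R)$, which is the very kernel being identified, and Corollary \ref{BrauermonidequalsBrauergroup} records that this kernel is trivial exactly when endomorphism rings of faithfully projective modules are stably matrix algebras. If your key step held for all $R$, the kernel clause of the proposition would collapse to ``the kernel is trivial,'' contrary to the point of introducing $Br^*(R)$ as possibly finer than $Br(R)$. So $[A^{\mathrm{op}}]$ is in general \emph{not} the inverse of $[A]$ in $Br^*(R)$; it is only an inverse modulo the kernel.

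The paper obtains invertibility by a different route, via Bass's theorem \cite[Proposition 4.6, p. 476]{b2}: for the faithful finitely generated projective module underlying $A$ there is a faithful finitely generated projective $Q$ with $A\otimes_R Q$ free. Then
\[
A\otimes A^{\mathrm{op}}\otimes \mathrm{End}_R(Q)\;\cong\;\mathrm{End}_R(A)\otimes\mathrm{End}_R(Q)\;\cong\;\mathrm{End}_R(A\otimes Q)
\]
is a matrix algebra, so the inverse of $[A]$ is $[A^{\mathrm{op}}\otimes\mathrm{End}_R(Q)]$: the extra factor $\mathrm{End}_R(Q)$ is exactly what absorbs the possibly nontrivial class of $\mathrm{End}_R(A)$. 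Your treatment of the kernel (injectivity of $ME(R)/MM(R)\to Br^*(R)$ and the identification of the preimage of the identity) is essentially sound once the group structure is available, though the paper argues more directly that a Brauer-trivial Azumaya algebra is the endomorphism ring of a projective and hence lies in $ME(R)$. But as written, the central invertibility step of your proof does not go through, and repairing it requires Bass's theorem rather than any strengthening of your $K_0$ argument.
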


The proof of the proposition will use the fact, already mentioned, that an Azumaya $R$ algebra $A$ is a tensor factor of a matrix algebra. 

\begin{proof}  As remarked above, the invertible elements of $Br^*(R)$ are the classes of the Azumaya algebras $A$ for which there exists an Azumaya algebra $B$ and matrix algebras $M_1$ and $M_2$ such that $A \otimes B \otimes M_1$ is isomorphic to $M_2$. Since any Azumaya algebra is a tensor factor of a matrix algebra, this condition holds. Thus $Br^*(R)$ is a group. If the Azumaya algebra $A$ is the endomorphism ring of a projective module $P$, by the theorem of Bass previously quoted there is a projective module $Q$ such that, if $B$ denotes the endomorphism ring of $Q$, $A \otimes_R B$ is a matrix ring. So the class of $B$ is the inverse of the class of $A$ in $Br^*(R)$. This shows that $ME(R)/MM(R)$ is a subgroup of $MA(R)/MM(R)=Br^*(R)$. Since matrix algebras are trivial in the Brauer group, the monoid surjection $MA(R) \to Br(R)$ factors through $Br^*(R)$. Suppose the Azumaya algebra $A$'s isomorphism class in $MA(R)$ maps to a class in $Br^*(R)$ which goes to the identity in $Br(R)$. That is, suppose the class of $A$ is trivial in the Brauer group of $R$. Then $A$ is the endomorphism ring of a projective, and so its isomorphism class in $MA(R)$ lies in $ME(R)$. It follows that  $ME(R)/MM(R)$ is the kernel.
\end{proof}

Proposition \ref{Brauermonoid} shows that the Brauer monoid of a commutative ring differs from the Brauer group of the ring by an identifiable subgroup. If this subgroup is trivial, then $Br^*(R)$ and $Br(R)$ coincide. To describe this situation, we introduce some terminology:

\begin{definition} \label{stable} An $R$ algebra  $A$ is said to be a \emph{stably matrix}  algebra if there are $n$ and $m$ such that  $A \otimes M_n(R)$ is isomorphic to $M_m(R)$. A differential $R$ algebra  $A$ is said to be a \emph{differentially stably matrix} algebra if there are $n$ and $m$ such that  $A \otimes (M_n(R),0) $ is differentially isomorphic to $(M_m(R), 0)$. A faithful projective $R$ module $P$ is said to be \emph{stably tensor free} is there are $n$ and $m$ such that $P \otimes R^n$ is isomorphic to $R^m$.
\end{definition} 

A differential Azumaya algebra $A$ represents the identity element of $BM^\text{diff}(R)$ if and only if $A$ is a differentially stably matrix algebra. If so, and if $A \otimes (M_n(R),0) $ is differentially isomorphic to $(M_m(R), 0)$, then by Theorem \ref{factorinduced} $A$ is differentially isomorphic to $R \otimes A^D$. This implies that $A^D$ is a projective $R^D$ module of the same rank as that of $A$ over $R$. If the ranks of $A$ and $A^D$ differ, then the class of $A$ in $BM^\text{diff}(R)$ is not the identity element.

\begin{corollary} \label{BrauermonidequalsBrauergroup} The map $Br^*(R) \to Br(R)$ is an isomorphism if and only if endomorphism rings of faithful projective $R$ modules are stably matrix algebras. If all faithful projective $R$ modules are free, or even stably tensor free, then all their endomorphism rings are stably matrix algebras.
\end{corollary}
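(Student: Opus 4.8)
The plan is to read off both directions directly from Proposition \ref{Brauermonoid}, which tells us that $Br^*(R) \to Br(R)$ is surjective with kernel $ME(R)/MM(R)$. Since a surjective group homomorphism is an isomorphism exactly when its kernel is trivial, the entire first sentence reduces to translating the triviality of $ME(R)/MM(R)$ into the stably-matrix condition, and the second sentence is a short computation with endomorphism rings.

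First I would unwind the definition of the quotient monoid. An element $[\text{End}_R(P)] \in ME(R)$ maps to the identity of $Br^*(R)=MA(R)/MM(R)$ if and only if there are matrix algebras $M_n(R)$ and $M_m(R)$ with $\text{End}_R(P)\otimes M_n(R)\cong M_m(R)$; by Definition \ref{stable} this is exactly the statement that $\text{End}_R(P)$ is a stably matrix algebra. Because $ME(R)$ is by definition the set of classes of endomorphism rings of faithful finitely generated projective (i.e.\ faithfully projective) $R$ modules, the kernel $ME(R)/MM(R)$ is trivial precisely when every such endomorphism ring is stably matrix. That yields the first assertion.

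For the second sentence I would invoke the standard algebra isomorphism $\text{End}_R(P\otimes_R Q)\cong \text{End}_R(P)\otimes_R \text{End}_R(Q)$, the same fact already used in the proof of Proposition \ref{Brauermonoid}. If $P$ is stably tensor free, say $P\otimes_R R^n\cong R^m$, then applying $\text{End}_R(-)$ to both sides and using $\text{End}_R(R^n)=M_n(R)$ gives $\text{End}_R(P)\otimes M_n(R)\cong \text{End}_R(R^m)=M_m(R)$, so $\text{End}_R(P)$ is stably matrix. The case of a free module is the special instance $n=1$: if $P\cong R^m$ then $\text{End}_R(P)\cong M_m(R)$ outright.

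The only genuine content is the translation in the middle paragraph; everything else is formal. The point to be careful about is matching the identity class of a quotient $M/N$ — defined via $m_1 n_1 = m_2 n_2$ — against the stably-matrix relation, but since $MM(R)$ is a submonoid closed under $\otimes$ and contains $R=M_1(R)$, the absorbed factor $n_2$ can be taken to be a single matrix algebra, so the matching is immediate. I therefore expect no real obstacle: the corollary is essentially a dictionary entry linking the kernel computed in Proposition \ref{Brauermonoid} to Definition \ref{stable}.
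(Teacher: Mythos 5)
Your proposal is correct and is exactly the argument the paper intends: the corollary is stated without proof as an immediate consequence of Proposition \ref{Brauermonoid} (kernel $ME(R)/MM(R)$) together with Definition \ref{stable}, and your unwinding of the quotient-monoid identity class plus the isomorphism $\text{End}_R(P\otimes_R Q)\cong \text{End}_R(P)\otimes_R\text{End}_R(Q)$ (already used in the proof of Proposition \ref{Brauermonoid}) fills in precisely the details the paper leaves implicit.
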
 

It is not in general the case that the differential Brauer monoid is a group. 

Our next result can be understood as saying that the differential Brauer monoid differs from the Brauer monoid by the matrix differential Brauer monoid, and it differs from the Brauer group by the endomorphism differential Brauer monoid. 

\begin{theorem} \label{monoidkernel} The monoid surjection $BM^\text{diff}(R) \to BM(R)$ induces isomorphisms $BM^\text{diff}(R)/MBM^\text{diff}(R) \to BM(R)=Br^*(R)$ and $BM^\text{diff}(R)/EMB^\text{diff}(R)\to Br(R)$.
Thus $MA^\text{diff}(R)/MM^\text{diff}(R) \cong Br^*(R)$ and $MA^\text{diff}(R)/EMB^\text{diff}(R) \cong Br(R)$.
\end{theorem}

\begin{proof}  For the purposes of the proof, we will write $\cong$ for $R$ algebra isomorphism and $\cong_\text{diff}$ for differential $R$ algebra isomorphism. Suppose $A$ and $B$ are differential Azumaya $R$ algebras whose images are equal in $BM(R)$. Let $C$ be an Azumaya $R$ algebra whose image in $BM(R)$ is the inverse of the class of $B$. Then there are  matrix $R$ algebras $M_c$ and $M_d$ such that $(1)$ $B \otimes C \otimes M_c \cong M_d$ (since $C$ is the inverse of $B$) and, since $A$ and $B$ are equal in the Brauer monoid, there are also matrix algebras $M_a$ and $M_b$ such that $(2)$ $A \otimes C \otimes M_a \cong M_b$.  Now $A$ and $B$ already have differential structures. Put a differential structure on $C$ and $M_c$, and use the isomorphism $(1)$ to give $M_d$ a differential structure so that $(1)$ is a differential isomorphism. Put a differential structure on $M_a$, and use the differential structure already placed on $C$ and the isomorphism $(2)$ to give $M_b$ a differential structure so that $(2)$ is a differential isomorphism. We tensor $(2)$ with $B \otimes M_c$ to obtain 
$$A \otimes C \otimes M_a \otimes B \otimes M_c \cong_\text{diff} M_b \otimes B \otimes M_c.$$
Then we use $(1)$ to replace $B \otimes C \otimes M_c$ on the left with $M_d$ to obtain
$$ A \otimes M_d \otimes M_a \cong_\text{diff} B \otimes M_a \otimes  M_c$$
which shows that $A$ and $B$ are fine differential Brauer equivalent. Thus the surjection $$BM^\text{diff}(R) \to BM(R)$$ induces an injection, hence an isomorphism, $$BM^\text{diff}(R)/MBM^\text{diff}(R)\to BM(R).$$

Now suppose that $A$ and $B$ are differential Azumaya $R$ algebras whose images are equal in $Br(R)$. Let $C$ be an Azumaya $R$ algebra representing the inverse of $B$ in $Br(R)$. There are endomorphism rings $E_1$ and $E_2$ of projective $R$ modules such that $(3)$ $A \otimes C \cong E_1$ and $(4)$ $B \otimes C\cong E_2$. Put a differential structure on $C$, and use $(3)$ and $(4)$ to put differential structures on $E_1$ and $E_2$ such that $(3)$ and $(4)$ become differential isomorphisms. Tensor $(3)$ with $B$: this gives a differential isomorphism $A \otimes C \otimes B \cong_\text{diff} E_1 \otimes B$, then use $(4)$ to replace $C \otimes B$ with $E_2$ to obtain a differential isomorphism
$$ A \otimes E_2 \cong_\text{diff} B \otimes E_1$$ which shows $A$ and $B$ are differential Brauer equivalent. Thus the surjection $$BM^\text{diff}(R) \to Br(R)$$ induces an injection, hence an isomorphism, 
$$BM^\text{diff}(R)/ME^\text{diff}(R)\to Br(R).$$ The final assertions are restatements of the first two using the third isomorphism theorem for monoids.

\end{proof}

The final isomorphisms  of Theorem \ref{monoidkernel} say that differential Azumaya algebras are differential fine Brauer equivalent if their underlying algebras are fine Brauer equivalent, and they are differential Brauer equivalent if their underlying algebras are Brauer equivalent. That is, neither of these differential equivalence relations make any distinctions based on differential structure. Differential matrix Brauer equivalence, on the other hand, does make distinctions based on differential structure, as is shown below.  Since matrix Brauer equivalence impacts differential matrix Brauer equivalence, we keep track of their connection.

As a consequence of Theorem \ref{monoidkernel}, we know that, modulo the matrix differential Brauer monoid, the differential Brauer monoid and the Brauer monoid coincide, so that in particular (as noted in the following corollary) if the matrix differential Brauer monoid is trivial the differential Brauer monoid and the Brauer monoid coincide. In this sense, the matrix differential Brauer monoid captures the additional structure of the differential case.

\begin{corollary} \label{MBMtrivial} If $MBM^\text{diff}(R)=1$ then $BM^\text{diff}(R) =BM(R)$.
\end{corollary}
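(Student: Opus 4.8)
The plan is to derive this directly from Theorem \ref{monoidkernel}, since the corollary is essentially the degenerate case of that isomorphism. The key general observation is a formal property of the monoid quotient defined at the start of Section \ref{monoids}: if $N$ is the trivial submonoid $\{1\}$ of a monoid $M$, then $M/N = M$. Indeed, by the definition of the quotient, $m_1 \sim m_2$ requires elements $n_1, n_2 \in N$ with $m_1 n_1 = m_2 n_2$; when $N = \{1\}$ the only choice is $n_1 = n_2 = 1$, so $m_1 \sim m_2$ forces $m_1 = m_2$, and the equivalence relation collapses to equality.

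First I would make precise the sense in which $MBM^\text{diff}(R)$ appears as the submonoid being quotiented in Theorem \ref{monoidkernel}. Both $BM^\text{diff}(R) = MA^\text{diff}(R)/MM_0^\text{diff}(R)$ and $MBM^\text{diff}(R) = MM^\text{diff}(R)/MM_0^\text{diff}(R)$ are quotients by the same submonoid $MM_0^\text{diff}(R)$, and $MM^\text{diff}(R) \subseteq MA^\text{diff}(R)$, so there is a natural monoid map $MBM^\text{diff}(R) \to BM^\text{diff}(R)$ whose image is the relevant submonoid. The hypothesis $MBM^\text{diff}(R) = 1$ says precisely that every class $[(M_n(R),Z)]$ in $MM^\text{diff}(R)/MM_0^\text{diff}(R)$ equals the identity; since the equivalence realizing this is witnessed by elements of $MM_0^\text{diff}(R)$, the same equivalence holds in $BM^\text{diff}(R)$, so the image of $MBM^\text{diff}(R)$ in $BM^\text{diff}(R)$ is the trivial submonoid.

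Applying the formal observation with $M = BM^\text{diff}(R)$ and $N = MBM^\text{diff}(R) = 1$ then gives $BM^\text{diff}(R)/MBM^\text{diff}(R) = BM^\text{diff}(R)$. Combining this equality with the isomorphism $BM^\text{diff}(R)/MBM^\text{diff}(R) \to BM(R)$ furnished by Theorem \ref{monoidkernel} yields the desired conclusion $BM^\text{diff}(R) = BM(R)$.

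There is essentially no obstacle here; the content of the corollary is carried entirely by Theorem \ref{monoidkernel}. The only point requiring a moment of care is the bookkeeping in the second paragraph, namely checking that the monoid-theoretic triviality of $MBM^\text{diff}(R)$ transfers to triviality of its image inside $BM^\text{diff}(R)$ rather than merely triviality as an abstract monoid; once that identification is made, the result is immediate.
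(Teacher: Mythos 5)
Your proof is correct and follows essentially the same route as the paper, which offers no separate argument for this corollary but presents it as an immediate consequence of Theorem \ref{monoidkernel} (quotienting by a trivial submonoid changes nothing). Your added bookkeeping—identifying $MBM^\text{diff}(R)$ with its image in $BM^\text{diff}(R)$ and checking that triviality transfers to that image—is exactly the implicit step the paper leaves to the reader.
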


It is also true that if the Brauer monoid of $R$ is trivial then the differential Brauer monoid of $R$ coincides with the matrix differential Brauer monoid of $R$. Since as noted above, the triviality of a quotient of a monoid by a submonoid does not imply that the monoid and submonoid are equal, this is not quite a corollary of the statement of  Theorem \ref{monoidkernel}, but it follows from the same techniques used in its proof:

\begin{corollary} \label{trivialBM} If $BM(R)=1$ then $MBM^\text{diff}(R)=BM^\text{diff}(R)$
\end{corollary}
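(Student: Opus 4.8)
The plan is to show that every element of $BM^\text{diff}(R)$ is represented by a matrix differential algebra, i.e. by an algebra of the form $(M_n(R),Z)$, under the hypothesis $BM(R)=1$. Once this is established, the inclusion $MM^\text{diff}(R) \hookrightarrow MA^\text{diff}(R)$ induces a surjection $MBM^\text{diff}(R) = MM^\text{diff}(R)/MM_0^\text{diff}(R) \to MA^\text{diff}(R)/MM_0^\text{diff}(R) = BM^\text{diff}(R)$ which, combined with the fact that $MBM^\text{diff}(R)$ is already a submonoid of $BM^\text{diff}(R)$, yields the desired equality.

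First I would take an arbitrary differential Azumaya $R$ algebra $A$. Since $BM(R)=1$, the class of the underlying Azumaya algebra $A$ is trivial in the Brauer monoid, so by the definition of the equivalence relation there are integers $m,n$ with $M_m(A) \cong M_n(R)$ as (non-differential) $R$ algebras; equivalently $A \otimes M_m(R) \cong M_n(R)$. The key point is that this is only an isomorphism of ordinary $R$ algebras, whereas $A \otimes (M_m(R),Z)$ carries a differential structure (for any choice of $Z$, say $Z=0$) coming from $A$ and from the entrywise derivation. I would transport the differential structure on $A \otimes (M_m(R),0)$ across the algebra isomorphism $A \otimes M_m(R) \cong M_n(R)$ to obtain a derivation on $M_n(R)$, which by Corollary \ref{matrixderivation} has the form $\partial_W$ for some $W \in M_n(R)$. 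Thus $A \otimes (M_m(R),0) \cong_\text{diff} (M_n(R),W)$, so in $BM^\text{diff}(R)$ the class of $A$ equals the class of $(M_n(R),W)$, which lies in the image of $MBM^\text{diff}(R)$.

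Here I am using, exactly as in the proof of Theorem \ref{monoidkernel}, the device that an abstract $R$ algebra isomorphism can always be upgraded to a differential isomorphism by \emph{defining} the derivation on the target to be the transport of the derivation on the source. This shows that the composite $MM^\text{diff}(R) \to BM^\text{diff}(R)$ is surjective. Since the submonoid $MM_0^\text{diff}(R)$ maps to the identity of $BM^\text{diff}(R)$ (by the very definition of $BM^\text{diff}(R)$ as a quotient by $MM_0^\text{diff}(R)$), the surjection factors through $MBM^\text{diff}(R) = MM^\text{diff}(R)/MM_0^\text{diff}(R)$, and the induced map $MBM^\text{diff}(R) \to BM^\text{diff}(R)$ is surjective; as it is manifestly injective (both are quotients of submonoids of $MA^\text{diff}(R)$ by the same $MM_0^\text{diff}(R)$, with $MM^\text{diff}(R) \subseteq MA^\text{diff}(R)$), it is an isomorphism.

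The main obstacle I anticipate is bookkeeping about which isomorphisms are merely $R$ algebra isomorphisms versus genuine differential isomorphisms, and making sure the quotient by $MM_0^\text{diff}(R)$ is respected throughout; the statement is \emph{not} a formal corollary of Theorem \ref{monoidkernel} precisely because there $BM(R)$ sat in the quotient, whereas here $MBM^\text{diff}(R)$ is the target, so one must re-run the transport-of-structure argument rather than invoke the theorem directly. No genuinely new idea beyond the proof of Theorem \ref{monoidkernel} should be needed, only care in identifying the representative $(M_n(R),W)$ and confirming it lies in $MM^\text{diff}(R)$.
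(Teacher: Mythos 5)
Your proposal is correct and is essentially the paper's own proof: the paper likewise uses $BM(R)=1$ to get $A \otimes M_p(R) \cong M_q(R)$ as $R$ algebras, transports the differential structure of $A \otimes (M_p(R),0)$ to obtain $A \otimes (M_p(R),0) \cong_\text{diff} (M_q(R),Z)$ for suitable $Z$ (via Corollary \ref{matrixderivation}), and concludes the class of $A$ lies in $MBM^\text{diff}(R)$. Your additional bookkeeping that $MBM^\text{diff}(R) \to BM^\text{diff}(R)$ is injective is a point the paper leaves implicit, and your justification of it is sound.
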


\begin{proof} If $BM(R)=1$ then for any Azumaya $R$ algebra $A$ there are $p$ and $q$ such that $A \otimes M_p(R) \cong M_q(R)$. If $A$ is a differential Azumaya algebra, so is $A \otimes (M_p(R), 0)$ and hence for suitable $Z$ $A \otimes (M_p(R),0) \cong_\text{diff} (M_q(R), Z)$. Since the class of $(M_q(R), Z)$  is in $MBM^\text{diff}(R)$, so is the class of $A$.
\end{proof}

The hypothesis of Corollary \ref{trivialBM} is satisfied, for example, using Proposition \ref{Brauermonoid} and Corollary \ref{BrauermonidequalsBrauergroup}, when $Br(R)=1$ and all faithful projective $R$ modules are free. 

Theorem \ref{monoidkernel} and Corollary \ref{trivialBM} are about the contribution of $BM(R)$ to $BM^\text{diff}(R)$. Our next result is about the contribution of $BM(R^D)$.

\begin{theorem} \label{unitsinduced} The map $BM(R^D) \to BM^\text{diff}(R)$ by $A_0 \mapsto R \otimes A_0$ is an isomorphism from $BM(R^D)$ to the group of units of $BM^\text{diff}(R)$.
\end{theorem}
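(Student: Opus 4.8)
The plan is to verify in turn that the assignment $A_0 \mapsto R \otimes_{R^D} A_0$ descends to a well-defined monoid homomorphism $\varphi \colon BM(R^D) \to BM^\text{diff}(R)$, that its image is exactly the group of units $U(BM^\text{diff}(R))$, and that it is injective. Recall first that $BM(R^D) = Br^*(R^D)$ is a group by Proposition \ref{Brauermonoid}, and that $R \otimes_{R^D} A_0$ carries the derivation $D_R \otimes 1$ and is Azumaya over $R$ whenever $A_0$ is Azumaya over $R^D$, so the assignment makes sense on the level of algebras.

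For well-definedness and the homomorphism property I would lean on base change. If $A_0 \otimes M_p(R^D) \cong B_0 \otimes M_q(R^D)$ over $R^D$, then applying $R \otimes_{R^D} -$ together with the differential isomorphism $R \otimes_{R^D} M_k(R^D) \cong_\text{diff} (M_k(R), 0)$ recorded in Theorem \ref{factorinduced} gives $(R \otimes_{R^D} A_0) \otimes (M_p(R), 0) \cong_\text{diff} (R \otimes_{R^D} B_0) \otimes (M_q(R), 0)$, so the two images are equivalent in $BM^\text{diff}(R)$; thus $\varphi$ is well-defined. The homomorphism property is the natural differential isomorphism $R \otimes_{R^D} (A_0 \otimes_{R^D} B_0) \cong_\text{diff} (R \otimes_{R^D} A_0) \otimes_R (R \otimes_{R^D} B_0)$, and matrix algebras over $R^D$, representing the identity of $BM(R^D)$, map to classes $(M_k(R),0)$, which represent the identity of $BM^\text{diff}(R)$.

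That the image consists of units, and of all units, is where Theorem \ref{factorinduced} does the work. For the first inclusion, given $A_0$ Azumaya over $R^D$, choose (since $Br^*(R^D)$ is a group) an Azumaya $R^D$ algebra $B_0$ with $A_0 \otimes B_0 \otimes M_n(R^D) \cong M_m(R^D)$; base change yields $(R \otimes_{R^D} A_0) \otimes (R \otimes_{R^D} B_0) \otimes (M_n(R),0) \cong_\text{diff} (M_m(R),0)$, exhibiting $[R \otimes_{R^D} B_0]$ as a differential inverse. Conversely, if $[A]$ is a unit of $BM^\text{diff}(R)$ then there is a differential Azumaya $B$ with $A \otimes B \otimes (M_n(R),0) \cong_\text{diff} (M_m(R),0)$; setting $B' = B \otimes (M_n(R),0)$ and applying Theorem \ref{factorinduced} to $A \otimes B' \cong_\text{diff} (M_m(R),0)$ shows $A^D$ is Azumaya over $R^D$ and $A \cong_\text{diff} R \otimes_{R^D} A^D$, so $[A] = \varphi([A^D])$ lies in the image.

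The main obstacle, and the step drawing on the full strength of the structure theory, is injectivity. Suppose $\varphi([A_0]) = \varphi([B_0])$, i.e. $(R \otimes_{R^D} A_0) \otimes (M_m(R),0) \cong_\text{diff} (R \otimes_{R^D} B_0) \otimes (M_n(R),0)$. Rewriting both sides as $R \otimes_{R^D} C_0$ and $R \otimes_{R^D} D_0$ with $C_0 = A_0 \otimes M_m(R^D)$ and $D_0 = B_0 \otimes M_n(R^D)$, each finitely generated projective over $R^D$, I would apply the constants functor $(-)^D$ to the differential isomorphism. Since a differential algebra isomorphism carries $\ker D$ isomorphically onto $\ker D$ and is $R^D$ linear, and since $(R \otimes_{R^D} P_0)^D = P_0$ for finitely generated projective $P_0$ by \cite[Lemma 1, p. 4340]{jm2}, this produces an $R^D$ algebra isomorphism $C_0 \cong D_0$, that is $A_0 \otimes M_m(R^D) \cong B_0 \otimes M_n(R^D)$, whence $[A_0] = [B_0]$ in $BM(R^D)$. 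The delicate point to check is exactly that taking $D$-constants genuinely recovers $C_0$ and $D_0$ as $R^D$ algebras under the identification of the cited lemma, so that the extracted map is an honest $R^D$ algebra isomorphism rather than merely an additive one.
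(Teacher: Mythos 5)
Your proposal is correct and follows essentially the same route as the paper: both halves of the argument hinge on Theorem \ref{factorinduced} (every unit of $BM^\text{diff}(R)$ is a tensor factor of a trivially-derived matrix algebra, hence induced from an Azumaya $R^D$ algebra) together with the fact that $BM(R^D)$ is a group. The only difference is cosmetic: for injectivity the paper checks triviality of the kernel of the group homomorphism onto the units, again via Theorem \ref{factorinduced}, whereas you prove injectivity directly by taking constants of the differential isomorphism and citing \cite[Lemma 1, p. 4340]{jm2} --- the same mechanism that powers Theorem \ref{factorinduced} itself, so the two arguments coincide in substance.
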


\begin{proof} By Proposition \ref{Brauermonoid} $BM(R^D)$ is a group, and so the monoid homomorphism carries it to the group of units $U$ of $BM^\text{diff}(R)$. Suppose $A$ is a differential Azumaya $R$ algebra whose class in $BM^\text{diff}(R)$ is a unit. This means there is a differential Azumaya algebra $B$ and $p$, $q$ such that $A \otimes B \otimes (M_p(R), 0)$ is differentially isomorphic to $(M_q(R), 0)$. Thus $A$ ia a tensor factor of a matrix algebra with trivial derivation. By Theorem \ref{factorinduced} this means that $A^D$ is an Azumaya $R^D$ algebra and that $A$ is differentially isomorphic to $R \otimes A^D$. Thus $A$ lies in the image of $BM(R^D)$. To see that the onto group homomorphism $BM(R^D) \to U$ is an injection, let $A_0$ be an Azumaya algebra representing an element of the  kernel. Then there are $p$ and $q$ such that $(R \otimes A_0) \otimes (M_p(R), 0)$ is differentially isomorphic to $(M_q(R), 0)$. By Theorem \ref{factorinduced}, taking constants this differential isomorphism gives an $R^D$ isomorphism of $A_0 \otimes M_p(R^D)$ to $M_q(R^D)$ which means the class of $A_0$  is the identity in $BM(R^D)$.
\end{proof}

The kernel of the group homomorphism $\Phi:BM(R^D) \to BM(R)$ induced from the inclusion consists of classes represented by Azumaya $R^D$ algebras $A_0$ such that $R \otimes A_0$ is a stably matrix algebra, say $(R \otimes A_0) \otimes M_n(R) \cong M_m(R)$. Since $A_0 \otimes M_n(R^D)$ represents the same class as $A_0$, and $R \otimes (A_0 \otimes M_n(R^D))=(R \otimes A_0) \otimes M_n(R)$ is isomorphic to $M_m(R)$, we can replace $A_0$ with $A_0 \otimes M_n(R^D)$ and assume that $\Phi(A_0)$ is a matrix algebra. Now $\Phi$ factors as $BM(R^D) \to BM^\text{diff}(R) \to BM(R)$. The first map is injective by Theorem \ref{unitsinduced}, and by the remarks just noted the kernel of $\Phi$  is $BM(R^D) \cap MBM^\text{diff}(R)$, which, by Theorem \ref{unitsinduced} is the group of units of $MBM^\text{diff}(R)$.

An interesting special case occurs when $BM(R^D)=1$. Then Theorem \ref{unitsinduced} implies that $BM^\text{diff}(R)$ has no units except $1$, and in particular no torsion elements, in contrast with the situation of the Brauer group of $R$, which is torsion \cite[Corollary 11.2.5, p. 417]{f}.
When $BM^\text{diff}(R)$ has no units except $1$, every non-trivial element of $BM^\text{diff}(R)$ generates a cyclic submonoid of infinite order. In particular, if $BM^\text{diff}(R)$ is not trivial, it is infinite. 

When $BM^\text{diff}(R)$ has no units except 1, and $BM(R)$ is non--trivial, the monoid surjection $BM^\text{diff}(R) \to BM(R)$ can't split since by Proposition \ref{Brauermonoid} $BM(R)$ is a group. This implies that the existence of differential structures on Azumaya algebras can't be made canonical.

We also have the following consequences of Theorem \ref{monoidkernel}, Theorem \ref{unitsinduced}, Corollary \ref{MBMtrivial}, and Corollary \ref{trivialBM}:

\begin{corollary} \label{summary} If $BM(R^D)=BM(R)=1$ then $BM^\text{diff}(R)=MBM^\text{diff}(R)$ and has no units. If $MBM^\text{diff}(R)=1$ then $BM(R^D) \to BM(R)$ is an isomorphism.
\end{corollary}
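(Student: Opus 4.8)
The corollary has two independent assertions, and the plan is to derive each by assembling the structural results already established. For the first assertion, assume $BM(R^D)=BM(R)=1$. The plan is to invoke Corollary \ref{trivialBM}: since $BM(R)=1$, that corollary gives immediately that $MBM^\text{diff}(R)=BM^\text{diff}(R)$, which is the first claimed equality. For the ``no units'' part, I would apply Theorem \ref{unitsinduced}, which identifies the group of units of $BM^\text{diff}(R)$ with the isomorphic image of $BM(R^D)$. Since $BM(R^D)=1$ by hypothesis, this group of units is trivial, so $BM^\text{diff}(R)$ has no units except the identity. Thus the first sentence follows by citing Corollary \ref{trivialBM} and Theorem \ref{unitsinduced} in succession, with no new argument required.

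For the second assertion, assume $MBM^\text{diff}(R)=1$. The plan is to examine the group homomorphism $\Phi\colon BM(R^D)\to BM(R)$ induced by inclusion, using the factorization $BM(R^D)\to BM^\text{diff}(R)\to BM(R)$ discussed in the paragraph following Theorem \ref{unitsinduced}. That discussion already shows the first map is injective (by Theorem \ref{unitsinduced}) and that $\ker\Phi=BM(R^D)\cap MBM^\text{diff}(R)$ is the group of units of $MBM^\text{diff}(R)$. Under the hypothesis $MBM^\text{diff}(R)=1$, this kernel is trivial, so $\Phi$ is injective. For surjectivity, I would use that the second map $BM^\text{diff}(R)\to BM(R)$ is the surjection of Theorem \ref{monoidkernel} with kernel $MBM^\text{diff}(R)$; when the latter is trivial, Theorem \ref{monoidkernel} says $BM^\text{diff}(R)\to BM(R)$ is an isomorphism. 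Combining this with the injection $BM(R^D)\hookrightarrow BM^\text{diff}(R)$ of Theorem \ref{unitsinduced}, I need to check that the composite $\Phi$ is onto.

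The main obstacle I anticipate is precisely this surjectivity of $\Phi$, since it is not a formal consequence of the two maps in the factorization being individually nice. When $MBM^\text{diff}(R)=1$, Theorem \ref{unitsinduced} forces $BM(R^D)$ to map isomorphically onto the units $U$ of $BM^\text{diff}(R)$, while Theorem \ref{monoidkernel} forces $BM^\text{diff}(R)\cong BM(R)$; but $BM(R)$ is a group (Proposition \ref{Brauermonoid}), so every element of $BM^\text{diff}(R)$ must then be a unit, whence $U=BM^\text{diff}(R)$. I would make this the crux: the triviality of $MBM^\text{diff}(R)$ collapses $BM^\text{diff}(R)$ onto a group via Theorem \ref{monoidkernel}, and in a monoid isomorphic to a group every element is invertible, so the units exhaust the whole monoid. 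Then $BM(R^D)\cong U=BM^\text{diff}(R)\cong BM(R)$, and chasing through the definitions confirms the composite is exactly $\Phi$, giving the desired isomorphism. The delicate point to verify carefully is that the two isomorphisms (from Theorems \ref{unitsinduced} and \ref{monoidkernel}) are compatible with the inclusion-induced map $\Phi$, so that their composite really is $\Phi$ rather than merely some abstract isomorphism.
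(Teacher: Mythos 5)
Your proposal is correct and follows essentially the same route the paper intends: the paper states this corollary without a separate proof, as a direct assembly of Corollary \ref{trivialBM} and Theorem \ref{unitsinduced} for the first assertion, and of Corollary \ref{MBMtrivial} (equivalently Theorem \ref{monoidkernel}) plus Theorem \ref{unitsinduced} and the factorization $BM(R^D) \to BM^\text{diff}(R) \to BM(R)$ of $\Phi$ for the second. Your resolution of the surjectivity point --- that triviality of $MBM^\text{diff}(R)$ makes $BM^\text{diff}(R)$ a group, so its units exhaust it and coincide with the image of $BM(R^D)$ --- is exactly the intended argument, and the compatibility you flag is already supplied by the paper's observation that $\Phi$ factors through $BM^\text{diff}(R)$.
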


\section{Fields and Examples} \label{fields} 

We consider the case that $R^D=C$ is an algebraically closed field of characteristic zero. Since $C$ is a field, $Br^*(C)=Br(C)$ and since $C$ is algebraically closed $Br(C)=1$. 

\begin{example} \label{C} We consider the case $R=C$ equipped with the zero derivation. \end{example}Then by Corollary \ref{summary} $BM^\text{diff}(C)=MBM^\text{diff}(C)$. Consider the element represented by $(M_n(C), Z)$. As a linear transformation on $M_n(C)$ we have $I_Z =L_Z-R_Z$ where $L_Z$ (respectively $R_Z$) means left (respectively right) multiplication by $Z$. Any $C$ polynomial satisfied by $Z$ is satisfied by both $L_Z$ and $R_Z$, and since $L_Z$ and $R_Z$ commute we conclude that every eigenvalue of $I_Z$ is a difference of eigenvalues of $Z$. For any $p$, the linear transformation $I_Z \otimes 1 + 1 \otimes I_0=I_Z \otimes 1$ on $M_n(C) \otimes M_p(C)$ will have the same eigenvalues as $I_Z$. It follows that the set of eigenvalues of $I_Z$ is an invariant of the class of $(M_n(C), Z)$ in $BM^\text{diff}(C)$, and these eigenvalues are among the set of differences of eigenvalues of $Z$. In particular, if $Z \in M_n(C)$ and $W \in M_m(C)$ are such that the  their eigenvalues differences are disjoint, then $(M_n(C), Z)$ and $M_m(C),W)$ represent different elements of $BM^\text{diff}(C)$. We conclude that the cardinality of $BM^\text{diff}(C)$ is that of $C$.

\begin{example} \label{C(x)} We consider the case that $R=C(x)$ with derivation $d/dx$. \end{example} Then by \cite{ab}, $Br(C(x))=1$, and $C(x)$ being a field $Br^*(C(x))=Br(C(x))=1$ as well. By Corollary \ref{summary}, $BM^\text{diff}(C(x))=MBM^\text{diff}(C(x))$. Consider the map $BM^\text{diff}(C) \to BM^\text{diff}(C(x))$. Let $Z=[z_{ij}] \in M_2(C)$ be the  matrix with $z_{12}=1$ and the rest of its entries zero. Calculation shows that $I_Z^2 \neq 0$ although  $I_Z^3=0$, 

If there were $p,q$ such that $(M_2(C),Z) \otimes (M_p(C),0)$ were differentially isomorphic to $(M_q(C),0)$ then $I_Z \otimes 1$ would become the derivation $I_0$ on $M_q(C)$, but the  since the square of the former is not zero and the square of the latter is, this is impossible. Thus the class of $(M_2(C),Z)$ is not trivial in $BM^\text{diff}(C)$. However for the class of  $C(x) \otimes (M_2(C), Z)=(M_2(C(x)),Z)$ in $BM^\text{diff}(C(x))$, there is a $Y=[y_{ij}]\in M_2(C(x))$ such that $Y^\prime=-ZY$, namely where $y_{11}=x$, $y_{21}=-1$, $y_{12}=1$ and $y_{22}=0$. As we remarked above in the discussion after 
Notation \ref{derivationofmatrix}, 
conjugation by $Y$ then defines a differential isomorphism from $(M_2(C(x)), 0) \to (M_2(C(x)),Z)$. So in $BM^\text{diff}(C(x))$, the class of $C(x) \otimes (M_2(C), Z) $ is trivial. 

Now let $\lambda \in C$ and let $Z(\lambda) \in M_2(C)$ be the matrix with diagonal elements $1+ \lambda$ and $1$, with $\lambda \neq 0$. For $A=\begin{bmatrix} a& b\\ c&d \end{bmatrix}$, $[Z(\lambda),A]=\begin{bmatrix} 0 & \lambda b\\ -\lambda c & 0 \end{bmatrix}$. The standard unit matrices $e_{ij}^{(2)}$, $i=1,2$, $j=1,2$ are eigenvectors of $[Z(\lambda), \cdot ]$, with eigenvalues $0, \lambda, -\lambda$. Let $e_{ij}^{(p)}$, $1 \leq i,j \leq p$ be the standard unit matrices for $M_p(C)$. Then $e_{ij}^{(2)} \otimes e_{kl}^{(p)}$ are a $C$ basis of $M_2(C) \otimes M_p(C)$ of eigenvectors of $[Z(\lambda) \otimes 1, \cdot]$, the eigenvalues being $0, \lambda, -\lambda$. 

In the following, we are identifying $M_2(C(x)) \otimes M_p(C(x))$ with $M_{2p}(C(x))$ and regarding $Z(\lambda) \otimes 1$ as an element of this matrix ring in order to use Notation \ref{derivationofmatrix}.

Consider  $A=(M_2(C(x)) \otimes M_p(C(x)), Z(\lambda) \otimes 1)$; for convenience we write the derivation $D$. Then $D(e^{(2)}_{12} \otimes e^{(p)}_{11})=\lambda (e^{(2)}_{12} \otimes e^{(p)}_{11})$. Thus $A$ has a non-zero element $a$ with $D(a)=\lambda a$. We will call $a$ an \emph{e-element} and $\lambda$ an \emph{e-value}. Similarly, there are e-elements with e-values $0$ and $-\lambda$. Now suppose $b = \sum \beta_{ij,kl} e^{(2)}_{ij} \otimes e^{(p)}_{kl}$ is an e-vector in $A$ with e-value $\mu \neq 0$, so $D(b)=\mu b$. We calculate $D(b)$: this is 
\[
\sum \beta_{ij,kl}^\prime e^{(2)} \otimes e^{(p)}_{kl} +( [Z(\lambda)\otimes 1, \sum \beta_{ij,lk} e^{(2)}_{ij} \otimes e^{(p)}_{kl}] = \sum \beta_{ij,lk} [Z(\lambda),e^{(2)}_{ij}] \otimes e^{(p)}_{kl}).
\]
Setting $D(b)$ equal to $\mu b$ and comparing coefficients gives\
\[
\beta^\prime_{11,kl}=\mu \beta_{11,kl}, \beta^\prime_{22,kl}=\mu \beta_{22,kl}, \beta^\prime_{12,kl} + \lambda \beta_{12,kl} = \mu \beta_{12, kl}, \beta^\prime_{21,kl} - \lambda \beta_{21,kl} = \mu \beta_{21, kl}.
\]
We rewrite the last two equations as $\beta^\prime_{12,kl} =-(\lambda - \mu) \beta_{12, kl}, \beta^\prime_{21,kl}=  ( \lambda + \mu) \beta_{21, kl}$.
In the field $C(x)$ if $y^\prime =cy$ for $c \in C-\{0\}$ then $y=0$. Thus $\beta_{11,kl}=\beta_{22,kl}=0$, and either $\lambda=\mu$, $  \beta_{12, kl} \in C$ and $ \beta_{21, kl}=0$ or $\lambda=-\mu$, $  \beta_{12, kl} =0$ and $ \beta_{21, kl}\in C$. 

Thus the only non-zero e-values in $A$ are $\lambda, -\lambda$, and the corresponding e-vectors are in $(M_2(C), Z(\lambda)) \otimes (M_p(C), 0)$. We write $A=A(\lambda)$ to be explicit about the value $\lambda$,  and let $A_0(\lambda)$ denote  $(M_2(C), Z(\lambda))$.
Finally, let $\lambda_1$ and $\lambda_2$ be non-zero elements of $C$ and suppose that $ \pm \lambda_1 \neq \lambda_2 $. If $A_0(\lambda_i)$, $i=1,2$ go to the same thing under $BM^\text{diff}(C) \to BM^\text{diff}(C(x))$ then there is a $p$ such that $A(\lambda_1)$ is differentially isomorphic to $A(\lambda_2)$. But then the above calculation on e-values shows that  either $ \lambda_1 = \lambda_2 $ or  $ \lambda_1 = -\lambda_2 $, contrary to assumption. Thus the $A_0(\lambda)$ represent distinct elements of $BM^\text{diff}(C(x))$ (and \emph{a fortiori} different elements of $BM^\text{diff}(C)$).

\begin{example} \label{PVring} We consider the case that $F$ is an algebraically closed differential field with $C=F^D$. \end{example} Let $R$ be the Picard--Vessiot ring of a differential Galois extension $E$ of $F$ with differential Galois group $G$, which is a connected linear algebraic group over $C$. Then also $E^D=R^D=C$. We suppose the extension is given by a matrix differential equation $X^\prime=AX$ for $A \in M_n(F)$, $n > 1$. Then $E$ contains, the entries of, and is generated by the entries of, an invertible matrix $Y \in M_n(R)$ with $Y^\prime =AY$. This means that $(M_n(F), -A)$ goes to the trivial class in $BM^\text{diff}(F) \to BM^\text{diff}(R)$. (If $(M_n(F),-A)$ were already trivial then the Picard--Vessiot extension would be trivial.) Since $F$ is algebraically closed, $R$, as a differential ring, is isomorphic to $F[G]$. 

If $G$ is unipotent, then $R$ is a polynomial ring so all projective $R$ modules are free, so $BM(R)=Br(R)$, and since $R$ has trivial Brauer group, $BM(R)=1$. Thus by Corollary \ref{trivialBM} $BM^\text{diff}(R)=MBM^\text{diff}(R)$. 

If $G$ is a torus of rank $m$, $R$ is a Laurent polynomial ring, so all projective $R$ modules are free so $BM(R)=Br(R)$. Since $R^D=C$, $BM(R^D)=1$, so by 
Theorem \ref{unitsinduced}
$BM^\text{diff}(R)$ has no units. In this case $Br(R)$ is a product of $m(m-1)/2$ copies of $\mathbb Q/\mathbb Z$ \cite[Corollary 7, p. 166]{m} so $MBM^\text{diff} 
\subsetneq BM^\text{diff}(R)$.

\begin{example} \label{completePVring} Let $F$ be a differential field with $F^D=C$ and let $R$ be the Picard--Vessiot ring of a complete Picard--Vessiot compositum $E$ of $F$. \end{example}
We recall that a complete Picard--Vessiot compositum $E$ of $F$ is a differential field extension of $F$ with the same constants as $F$, generated over $F$ by the Picard--Vessiot extensions of $F$ contained in $E$, into which every Picard--Vessiot extension of $F$ maps. The Picard--Vessiot ring $R$ of $E$ is the set of elements satisfying monic linear homogeneous differential equations over $F$. Then for every $n$ and every $A \in M_n(F)$ a Picard--Vessiot extension for $X^\prime=AX$ appears in $E$, and thus there is an invertible $Y \in M_n(R)$ with $Y^\prime=-A Y$, which means that $(M_n(F), A)$ goes to the trivial class under $BM^\text{diff}(F) \to BM^\text{diff}(R)$. (Neither monoid has any units since $BM(F^D)=BM(R^D)=BM(C)=1$.) In particular, under $BM^\text{diff}(F) \to BM^\text{diff}(R)$,  $MBM^\text{diff}(F)$ goes to the identity.

A complete Picard--Vessiot compositum may have proper Picard--Vessiot extensions. By taking a complete Picard--Vessiot compositum of a complete Picard--Vessiot compositum, and iterating the process a countable number of times, one obtains a differential extension $\mathcal E$ of $F$ such that for any differential subfield $F_1$ of $\mathcal E$ every Picard--Vessiot extension of $F_1$ maps over $F_1$ to $\mathcal E$. That is, $\mathcal E$ contains a complete Picard--Vessiot compositum of $F_1$.
Let  $A \in M_n(\mathcal E)$ and let $F_1$ be a differential subfield of $\mathcal E$ containing the entries of $A$. Let $E_1$ be a complete Picard--Vessiot compositum of $F_1$ contained in $\mathcal E$. As we just saw, $(M_n(F), A)$ goes to the trivial class under $BM^\text{diff}(F) \to BM^\text{diff}(R_1)$, where $R_1$ is the Picard--Vessiot ring of $E_1$. Since $R_1 \subset \mathcal E$,  $(M_n(F), A)$ is also the trivial class in $BM^\text{diff}(\mathcal E)$. Hence 
$MBM^\text{diff}(\mathcal E) =1$. So by Corollary \ref{MBMtrivial} $BM^\text{diff}(\mathcal E)=BM(\mathcal E)$. Because $\mathcal E$ is a field, $BM(\mathcal E)=Br(\mathcal E)$.
Since $\mathcal E$ is algebraically closed (algebraic extensions are Picard--Vessiot), $Br(\mathcal E)=1$. Thus $BM^\text{diff}(\mathcal E)=1$.

\end{document}